\documentclass{amsart}

\usepackage{idempotent}

\newcommand{\Per}{\mathrm{Per}}

\newtheorem*{theoremA}{Theorem A}
\newtheorem*{theoremB}{Theorem B}
\newtheorem*{theoremC}{Theorem C}
\newtheorem*{theoremD}{Theorem D}

\newcommand{\subgrp}{\le}


\title[Alternate proof]{An alternate proof of idempotent relations
  among periodic points and quotients}

\author{Xander Faber}
\author{Michelle Manes}
\author{Laura Walton}
\thanks{MM  partially supported by
Simons Collaboration Grant \#359721.}

\begin{document}
\begin{abstract}
We give a short proof of an idempotent relation formula for counting
periodic points of endomorphisms defined over finite fields. The
original proof of this result, due to Walton, uses formal manipulation
of arithmetic zeta functions, whereas we deduce the result directly
from a related theorem of Kani and Rosen.
\end{abstract}
\maketitle


\section{Introduction}

Let $X$ be a scheme of finite type over a finite field $\FF_q$, and
let $G \subgrp \Aut_X(\FF_q)$ be a finite subgroup of the automorphism
group of $X$, each element of which is defined over $\FF_q$. For each
subgroup $H \subgrp G$, we write $\pi_H \colon X \to X/H$ for the
quotient of $X$ by the action of $H$. (See, e.g., \cite[p.65]{MumfordAV}.)
 
Given a subgroup $H \subgrp G$, we define the associated
  idempotent in the group ring of $G$ to be
\[
\epsilon_H = \frac{1}{|H|}\sum_{h \in H} h \in \QQ[G].
\]
For a collection of integers $n_H$, we write $\sum_{H \subgrp G} n_H
\epsilon_H \sim 0$ if the sum is killed by every rational character of
$G$. Kani and Rosen proved a curious relation among the numbers of
points on $X$ and its various quotients:

\begin{theoremA}[\cite{kani_rosen_idempotents_jnt}]
  \label{Thm:KR}
    Let $X$ be a scheme of finite type over $\FF_q$, and let $G
    \subgrp \Aut_X(\FF_q)$ be a finite group of automorphisms. If
    $\sum_{H \subgrp G} n_H \epsilon_H \sim 0$, then
    \[
    \sum_{H \subgrp G} n_H \left|(X/H)(\FF_q)\right| = 0.
    \]
\end{theoremA}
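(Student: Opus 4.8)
The plan is to express $\left|(X/H)(\FF_q)\right|$ as a trace of Frobenius on a group‑ring module on which $\epsilon_H$ acts by projection, and then to apply Schur's lemma together with the hypothesis $\sum_H n_H\epsilon_H\sim 0$. First I would pass to a finite set. Let $N$ be the exponent of $G$, put $S=X(\FF_{q^N})$, and let $\rho\colon\QQ[G]\to\mathrm{End}_\QQ(\QQ[S])$ be the linear extension of the permutation representation attached to the $G$-action on $S$. Let $\Phi\colon S\to S$ be the permutation induced by the $q$-power Frobenius; since each $g\in G$ is defined over $\FF_q$, $\Phi$ commutes with the $G$-action, and $X(\FF_q)=S^\Phi$. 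The choice of $N$ guarantees that any Frobenius-stable $H$-orbit $\mathcal O\subseteq X(\overline{\FF}_q)$ lies in $S$: on such an orbit $\Phi$ restricts to an $H$-equivariant permutation of $\mathcal O\cong H/K$, hence to right translation by a coset in $N_H(K)/K$, whose order divides $N$, so $\Phi^N$ fixes $\mathcal O$ pointwise. Since $\pi_H$ realizes $X/H$ as a geometric quotient, it induces a Frobenius-equivariant bijection $X(\overline{\FF}_q)/H\xrightarrow{\,\sim\,}(X/H)(\overline{\FF}_q)$, and taking $\Phi$-fixed points yields
\[
\left|(X/H)(\FF_q)\right| \;=\; \#\bigl\{\, H\text{-orbits }\mathcal O\subseteq S : \Phi(\mathcal O)=\mathcal O \,\bigr\}.
\]

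Next I would convert this count into a trace. The operator $\rho(\epsilon_H)=\tfrac1{|H|}\sum_{h\in H}\rho(h)$ on $\QQ[S]$ is the projection onto the invariants $\QQ[S]^H$ and commutes with $\Phi$; as $\Phi$ preserves $\QQ[S]^H$ and $\ker\rho(\epsilon_H)$ and permutes the orbit-sum basis of $\QQ[S]^H$ exactly as it permutes orbits, the count above equals $\mathrm{tr}(\Phi\,\rho(\epsilon_H)\mid\QQ[S])$. Writing $\alpha=\sum_{H\subgrp G}n_H\epsilon_H\in\QQ[G]$ and summing,
\[
\sum_{H\subgrp G}n_H\left|(X/H)(\FF_q)\right| \;=\; \mathrm{tr}\bigl(\Phi\,\rho(\alpha)\mid\QQ[S]\bigr).
\]

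To evaluate this I would extend scalars to $\overline{\QQ}$ and decompose $\QQ[S]\otimes\overline{\QQ}=\bigoplus_i V_i\otimes M_i$ into $G$-isotypic pieces. Because $\Phi$ is $G$-equivariant, Schur's lemma gives $\Phi=\bigoplus_i \mathrm{id}_{V_i}\otimes A_i$, so
\[
\mathrm{tr}\bigl(\Phi\,\rho(\alpha)\mid\QQ[S]\bigr) \;=\; \sum_i \chi_i(\alpha)\,\mathrm{tr}(A_i),
\]
where $\chi_i$ is the character of $V_i$. The hypothesis now does the rest: since $\chi_i(\epsilon_H)=\langle\chi_i|_H,1\rangle=\dim V_i^{H}$, the value $\chi_i(\alpha)=\sum_H n_H\dim V_i^{H}$ is a rational integer, unchanged under Galois conjugation of $\chi_i$; hence the sum of $\chi_i$ over its Galois orbit is a rational character $\psi$ of $G$ with $\psi(\alpha)=(\#\,\mathrm{orbit})\cdot\chi_i(\alpha)$. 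As $\alpha\sim 0$ we get $\psi(\alpha)=0$, so $\chi_i(\alpha)=0$ for every $i$, and the displayed sum — hence $\sum_{H\subgrp G}n_H\left|(X/H)(\FF_q)\right|$ — vanishes.

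I expect the only real obstacle to be the first step: verifying in the stated generality that $(X/H)(\overline{\FF}_q)$ is the orbit set $X(\overline{\FF}_q)/H$ equipped with its induced Frobenius action. This is a point-set statement requiring no hypothesis on the characteristic or on freeness of the action, but it relies on the defining properties of the quotient construction and deserves to be spelled out; the representation-theoretic half is then formal, its one ingredient being the identity $\chi(\epsilon_H)=\dim V^{H}$.
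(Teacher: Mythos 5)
Your proof is correct, but note that the paper does not actually prove Theorem~A --- it is quoted from Kani and Rosen, and the paper's sole contribution is to deduce Theorem~B from it --- so the comparison has to be with the argument in \cite{kani_rosen_idempotents_jnt}. There the result is obtained at the level of zeta functions: one writes $\zeta(X/H,s)=\prod_\chi L(X,\chi,s)^{\langle \chi,\,\mathrm{Ind}_H^G 1\rangle}$ over irreducible characters, notes via Frobenius reciprocity that $\chi(\epsilon_H)=\langle\chi,\mathrm{Ind}_H^G 1\rangle=\dim V_\chi^H$, and lets the hypothesis cancel the exponents; Theorem~A is the coefficient of $q^{-s}$ in the logarithm (this is the passage from Theorem~C to Theorem~A described in Section~3). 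Your argument is the direct, finite shadow of that proof: the multiplicity trace $\mathrm{tr}(A_i)$ plays the role of the first logarithmic coefficient of $L(X,\chi_i,s)$, and you bypass $L$-functions entirely by cutting down to the finite $G$-set $S=X(\FF_{q^N})$ with $N=\exp(G)$ --- a reduction that is genuinely needed for the permutation-module bookkeeping and that you justify correctly by identifying $H$-equivariant self-maps of $H/K$ with right translations by $N_H(K)/K$. The two points you flag are the right ones and both are harmless: the identification of $(X/H)(\bar\FF_q)$ with $X(\bar\FF_q)/H$, Frobenius-equivariantly, is a standard property of the quotient that the paper itself assumes without comment (it is exactly what underlies the definition of $i_H$ in Section~2); and in the character-theoretic half the Galois orbit sum $\sum_\sigma\chi_i^\sigma$ need only be a rational character up to a positive integer multiple (the Schur index), which does not affect the vanishing conclusion. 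It is worth remarking that your replacement of $X(\bar\FF_q)$ by $X(\FF_{q^N})$ is the same finiteness trick the paper uses to prove Theorem~B, where $\Per$ is replaced by $\Per_N$ for a carefully chosen $N$.
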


The third author extended this result to count the \textit{periodic
  points} of an endomorphism defined over $\FF_q$.  More precisely,
let $V$\footnote{Both $X$ and $V$ represent finite-type schemes over a
  finite field. Throughout, we use the letter $X$ in the context of
  algebraic geometry and $V$ in a dynamical context. } be a scheme of
finite type over $\FF_q$ and let $f \colon V \to V$ be an endomorphism
of $V$. Define the automorphism group of $f$ by $\Aut_f(\FF_q) = \{g \in \Aut_V(\FF_q) : gfg^{-1} = f\}$, and
suppose that $G $ is a finite subgroup of $\Aut_f(\FF_q)$.  For a subgroup $H \subgrp G$, we write $f_H \colon V/H
\to V/H$ for the unique $\FF_q$-endomorphism that makes the following
diagram commute:
\[
\begin{tikzcd}
    V \ar[r, "f"] \ar[d, "\pi_H"]& V \ar[d, "\pi_H"] \\
    V/H \ar[r, "f_H"] &V/H
\end{tikzcd}
\]
Write $\Per(V/H,f_H)(\FF_q)$ for the set of $\FF_q$-rational periodic
points for $f_H$.

\begin{theoremB}[\cite{Walton_idempotent}]
  \label{Thm:Walton}
    With the above setup, if $\sum_{H \subgrp G} n_H \epsilon_H \sim 0$, then 
    \[
    \sum_{H \subgrp G} n_H \left| \Per(V/H,f_H)(\FF_q) \right| = 0.
    \]
\end{theoremB}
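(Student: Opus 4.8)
The plan is to deduce Theorem B from Theorem A by producing a \emph{single} closed subscheme $W \subseteq V$, stable under $G$, for which $\pi_H$ induces a bijection $\Per(V/H,f_H)(\FF_q) \xrightarrow{\ \sim\ } (W/H)(\FF_q)$ for \emph{every} subgroup $H \subgrp G$ at once. Granting such a $W$, Theorem A applied to $W$ with its induced $G$-action gives $\sum_{H \subgrp G} n_H\,|(W/H)(\FF_q)| = 0$ whenever $\sum_H n_H\epsilon_H \sim 0$, and the bijections turn this into $\sum_H n_H\,|\Per(V/H,f_H)(\FF_q)| = 0$, which is Theorem B.

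First I would pin down the right $W$. Write $\phi$ for the $q$-power Frobenius on $V(\overline{\FF}_q)$; it commutes with $f$ and with every element of $G$. Take $\bar x \in \Per(V/H,f_H)(\FF_q)$ and a geometric point $x \in V(\overline{\FF}_q)$ with $\pi_H(x) = \bar x$, so the fibre $\pi_H^{-1}(\bar x)$ is the orbit $Hx$. Two observations. First, since $\bar x$ is $\FF_q$-rational the orbit $Hx$ is $\phi$-stable, so $\phi(x) = h_0 x$ for some $h_0 \in H$; iterating gives $\phi^{j}(x) = h_0^{j} x$, and as $h_0$ has order dividing $|G|$ we get $\phi^{|G|}(x) = x$, i.e.\ $x$ lies in the \emph{finite} set $V(\FF_{q^{|G|}})$. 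Second, from $f_H \circ \pi_H = \pi_H \circ f$ the relation $f_H^m(\bar x) = \bar x$ unwinds to $f^m(x) = h x$ for some $m \geq 1$ and $h \in H$; iterating this and using $hf = fh$ gives $f^{\,m\cdot\mathrm{ord}(h)}(x) = x$, so $x$ is genuinely $f$-periodic. Since $x$ belongs to $V(\FF_{q^{|G|}})$, which $f$ preserves (being an $\FF_q$-morphism), the period of $x$ is at most $\#V(\FF_{q^{|G|}})$. So: fix once and for all an integer $N \geq 1$ divisible by every integer $\leq \#V(\FF_{q^{|G|}})$ (e.g.\ $N = (\#V(\FF_{q^{|G|}}))!$), and set $W := \mathrm{Fix}(f^N) \subseteq V$, the locus where $f^N = \mathrm{id}_V$ --- a closed subscheme defined over $\FF_q$ (assuming $V$ separated, as in the intended applications) and preserved by $G$, since each $g \in G$ commutes with $f^N$. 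The discussion shows $f^N(x) = x$, i.e.\ $x \in W(\overline{\FF}_q)$; note $N$ depends only on $V$, $f$, and $G$, not on $H$.

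Next I would prove the comparison bijection by identifying both sides with the set of $\phi$-stable $H$-orbits in $W(\overline{\FF}_q)$. On one hand, a $\phi$-stable $H$-orbit $O \subseteq W(\overline{\FF}_q)$ is in particular a $\phi$-stable $H$-orbit in $V(\overline{\FF}_q)$, hence defines a point $\bar x \in (V/H)(\FF_q)$; picking $x \in O$, since $f^N(x)=x$ we get $f_H^N(\bar x) = \pi_H(f^N(x)) = \pi_H(x) = \bar x$, so $\bar x \in \Per(V/H,f_H)(\FF_q)$, and the construction of the previous paragraph is the inverse map. On the other hand, $(W/H)(\FF_q) = (W(\overline{\FF}_q)/H)^{\phi}$, since for a finite-group quotient the geometric points are the orbits and, for any finite-type $\FF_q$-scheme, the $\FF_q$-points are the Frobenius-fixed geometric points. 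Hence $|\Per(V/H,f_H)(\FF_q)| = |(W/H)(\FF_q)|$ for every $H \subgrp G$, and Theorem A applied to $W$ finishes the proof.

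The hard part is exactly the uniformity just exploited: that \emph{one} scheme $W$ computes $\Per(V/H,f_H)(\FF_q)$ for all $H$ simultaneously. The naive guess $W = \mathrm{Fix}(f^m)$ for fixed $m$ fails twice over --- the $f_H^m$-fixed points of $V/H$ correspond to orbits $Hx$ satisfying only $f^m(x) \in Hx$ rather than $f^m(x)=x$, and the periods of $\overline{\FF}_q$-periodic points of $f$ are typically unbounded --- and both failures are repaired by the single remark that Frobenius moves the lift $x$ of an $\FF_q$-rational orbit \emph{inside the finite group} $H$. That one remark both traps $x$ in the fixed finite set $V(\FF_{q^{|G|}})$ and, through the twisted equation $f^m(x)=hx$, promotes $x$ to a periodic point of $f$ of bounded period, after which the equaliser $\mathrm{Fix}(f^N)$ for a single uniform $N$ does the job. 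The remaining ingredients --- existence of the quotients $V/H$ and $W/H$ as finite-type $\FF_q$-schemes and the description of their geometric points as $H$-orbits --- are standard and already implicit in the statements of Theorems A and B.
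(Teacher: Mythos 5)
Your proposal is correct and follows essentially the same route as the paper: both deduce Theorem~B from Theorem~A by exhibiting a single $G$-stable closed subscheme $W = \mathrm{Fix}(f^N) \subseteq V$ (the paper's $\Per_N(V,f)$) for one uniform $N$, identifying $(W/H)(\FF_q)$ with $\Per(V/H,f_H)(\FF_q)$ via Galois-stable $H$-orbits, and using the same key step $f^m(x) = hx \Rightarrow f^{m\cdot\mathrm{ord}(h)}(x) = x$ to promote lifts to genuine periodic points. The only difference is in how the uniform bound is chosen --- the paper takes $N = |G|\cdot \max_H\left(|(V/H)(\FF_q)|!\right)$ by bounding periods downstairs on each quotient, while you take $N = \left(\#V(\FF_{q^{|G|}})\right)!$ by trapping the lift $x$ in the finite set $V(\FF_{q^{|G|}})$ --- and both bounds work.
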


The third author's technique is to define zeta and $L$-functions that capture
the arithmetic of periodic points, and then to formally manipulate these objects
to arrive at an idempotent relation for zeta functions. (See Theorem~D in the
final section.) Later, we observed that Theorems~A and~B can be deduced from
each other.  One obtains Theorem~A from Theorem~B by setting $V = X$ and $f =
\id$. The reverse implication is not immediately obvious though: there is no
scheme that parameterizes the set of \textit{all} periodic points for an
endomorphism $f \colon V \to V$. Nevertheless, finite fields are finite (!), so
we are able to find a scheme that parameterizes enough of the periodic points to
arrive at the desired conclusion. We execute this plan this in the next section.



\section{Counting periodic points for quotients}

We keep the setup from the previous section. 

\begin{lemma}
  \label{lem:period}
  Define 
\[
M = \max\left(|(V/H)(\FF_q)|! \ : \ H \subgrp G\right).
\]
(Note the factorial.) Fix $H \subgrp G$, and let $P \in (V/H)(\FF_q)$. If
$P$ is periodic for $f_H$, then $P$ has period dividing $M$.
\end{lemma}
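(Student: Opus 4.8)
The plan is to use that $f_H$ restricts to a self-map of the \emph{finite} set $(V/H)(\FF_q)$, so any periodic point lies on a finite cycle whose length is bounded by $|(V/H)(\FF_q)|$, and then to unwind the definition of $M$ using monotonicity of the factorial.

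First I would record that $f_H \colon V/H \to V/H$ is a morphism of $\FF_q$-schemes, hence carries $\FF_q$-rational points to $\FF_q$-rational points; it therefore induces a map of finite sets $(V/H)(\FF_q) \to (V/H)(\FF_q)$, which I also denote $f_H$. Set $N_H = |(V/H)(\FF_q)|$. Now fix a periodic point $P \in (V/H)(\FF_q)$ and let $n \ge 1$ be its exact (minimal) period, so that $P, f_H(P), \dots, f_H^{n-1}(P)$ are $n$ distinct points of $(V/H)(\FF_q)$. Then $n \le N_H$, and since $n$ occurs as one of the factors in $N_H! = 1 \cdot 2 \cdots N_H$, we get $n \mid N_H!$. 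Every period of $P$ is a multiple of $n$, so it suffices to show $N_H! \mid M$.

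For that last step, because $a \le b$ forces $a! \mid b!$ and the factorial is nondecreasing on the nonnegative integers, the maximum defining $M$ is attained at the subgroup maximizing $N_K$; that is, $M = \bigl(\max_{K \subgrp G} N_K\bigr)!$. Since $N_H \le \max_{K \subgrp G} N_K$, we conclude $N_H! \mid M$, and therefore $n \mid M$.

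I do not expect a genuine obstacle: the argument is elementary, and the only points that need a word of justification are that $f_H$ preserves $\FF_q$-rational points (so that the forward orbit of an $\FF_q$-rational periodic point stays inside the finite set $(V/H)(\FF_q)$, forcing the cycle length to be at most $N_H$) and the divisibility bookkeeping for factorials. The factorial is used merely because it provides one convenient common multiple of every possible cycle length simultaneously across all the quotients $V/H$; any common multiple of $\{1, 2, \dots, \max_{K \subgrp G} N_K\}$ would serve equally well.
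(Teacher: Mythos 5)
Your proof is correct and follows essentially the same route as the paper: bound the cycle length of a periodic $\FF_q$-point by $|(V/H)(\FF_q)|$, then use the factorial as a common multiple of all possible cycle lengths, noting that $\max_K\bigl(N_K!\bigr) = \bigl(\max_K N_K\bigr)!$. The paper's version is just a more compressed statement of the same argument.
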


\begin{proof}
Observe that the orbit of $P$ consists of $\FF_q$-rational points of
$V/H$. So
  \[
  \left|\{f^n_H(P) \ : \ n \geq 0\}\right| \leq \left|(V/H)(\FF_q)\right| \leq
  \max\left(|(V/H)(\FF_q)| \ : \ H \subgrp G\right).
  \]
So the length of the orbit of $P$ divides the factorial of the
quantity on the right, which is $M$.
\end{proof}

Now define
\[
N = |G| \cdot M = |G| \cdot \max\left(|(V/H)(\FF_q)|! \ : \ H \subgrp G\right).
\]
Write $\Per_N(V, f)$ for the closed subscheme of $V$ consisting of
points of period dividing $N$. It is defined by the following
cartesian square:
\[
\begin{tikzcd}
  \Per_N(V,f) \ar[r,"i"] \ar[d, "j"]  & V \ar[d, "f^N"] \\
  V \ar[r,"\id"] & V
\end{tikzcd}
\]

\begin{proposition}
The closed subscheme  $\Per_N(V,f) \subset V$ is $G$-invariant.
\end{proposition}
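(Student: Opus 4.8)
The plan is to use the one feature that distinguishes a subgroup $G \subgrp \Aut_f(\FF_q)$ from an arbitrary group of automorphisms of $V$: every $g \in G$ commutes with $f$. From $gfg^{-1} = f$ we get $gf = fg$, and a one-line induction then gives $g \circ f^N = f^N \circ g$ (indeed for every exponent, in particular the chosen $N$). So the essential content is already in hand; what remains is to package the cartesian square defining $\Per_N(V,f)$ into a usable universal property.

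First I would record that property: the closed immersion $i \colon \Per_N(V,f) \hookrightarrow V$ realizes $\Per_N(V,f)$ as the equalizer of the two endomorphisms $f^N, \id \colon V \to V$ — that is, $i$ is universal among morphisms $\phi \colon T \to V$ with $f^N \circ \phi = \phi$, equivalently $\Per_N(V,f)$ represents the subfunctor $T \mapsto \{\phi \in V(T) : f^N \circ \phi = \phi\}$ of $T \mapsto V(T)$, which is exactly the "points of period dividing $N$" sitting inside $V$. Now fix $g \in G$ and apply this universal property to the morphism $g \circ i \colon \Per_N(V,f) \to V$: since $f^N \circ (g \circ i) = g \circ (f^N \circ i) = g \circ i$, the map $g \circ i$ equalizes $f^N$ and $\id$, hence factors uniquely as $i \circ g_N$ for some $g_N \colon \Per_N(V,f) \to \Per_N(V,f)$. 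Carrying out the same construction for $g^{-1}$ and invoking uniqueness of factorizations through $i$ shows $g_N$ and $(g^{-1})_N$ are mutually inverse, so $g_N$ is an automorphism; and the identity $i \circ g_N = g \circ i$ says $g_N$ is the restriction of $g$ to the closed subscheme $\Per_N(V,f)$. Since $g$ was arbitrary, $\Per_N(V,f)$ is $G$-invariant.

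I do not expect a genuine obstacle: once the square is read functorially, this is a short diagram chase whose only input is $gf = fg$. The one point worth stating carefully is that "$G$-invariant" here should mean that each $g$ restricts to an \emph{automorphism of the closed subscheme} $\Per_N(V,f)$ — not merely a bijection on points — and that is precisely what the factorization $i \circ g_N = g \circ i$ together with invertibility of $g_N$ delivers. (The specific value of $N$ plays no role in this proposition; it enters only later, through Lemma~\ref{lem:period}, when one compares $\Per_N(V,f)(\FF_q)$ with the full set of $\FF_q$-rational periodic points.)
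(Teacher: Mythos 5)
Your proof is correct and takes essentially the same route as the paper: both use the universal property of the cartesian square (which, as you note, is just the equalizer of $f^N$ and $\id$) together with $g f^N = f^N g$ to factor $g \circ i$ through $i$, and both obtain invertibility of the induced map by running the argument for $g^{-1}$ and invoking uniqueness.
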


\begin{proof}
Let $g \in G$. The action of $g$ on $\Per_N(V,f)$ is given by the composition 
$g \circ i$, and we would like this composition to factor as $i \circ \tilde g$
for some automorphism $\tilde g$ of $\Per_N(V,f)$. The cartesian square defining 
$\Per_N(V,f)$ and the fact that $f^N$ commutes with $G$ shows that 
\[
\id \circ g \circ j = g \circ j = g \circ \id \circ j = g \circ f^N \circ i = f^N \circ g \circ i.
\]
Hence, we get the following commutative diagram:
\[
\begin{tikzcd}
\Per_N(V,f) \ar[rrd,bend left=30, "g \circ i"] \ar[ddr,bend right=30, "g \circ j"] \ar[dr,dashed,"\tilde g ?"] & & \\
& \Per_N(V,f) \ar[r,"i"] \ar[d, "j"]  & V \ar[d, "f^N"] \\
&  V \ar[r,"\id"] & V
\end{tikzcd}
\]
The arrow $\tilde g$ exists and is unique by the universal property of 
$\Per_N(V,f)$ as a fiber product. 

To see that $\tilde g$ is an isomorphism, run the same argument two more times
with $g$ replaced by $g^{-1}$ and by the composition $g \circ g^{-1}$.
\end{proof}


For $H \subgrp G$, we define $\Per_N(V/H,f_H)$ by the following cartesian square:
\[
\begin{tikzcd}
  \Per_N(V/H,f_H) \ar[r] \ar[d]  & V/H \ar[d, "f_H^N"] \\
  V/H \ar[r,"\id"] & V/H
\end{tikzcd}
\]
The $H$-quotient of the locus of periodic points for $f: V \to V$ is subtly 
different from the locus of periodic points for $f_H : V/H \to V/H$. As we 
need to compare these sets, define a map
\[
i_H \colon \left(\Per_N(V,f)/H\right)(\FF_q) \to \Per_N(V/H,f_H)(\FF_q)
\]
as follows. The $\FF_q$-rational points of $\Per_N(V,f)/H$ correspond to Galois
invariant $H$-orbits of $\bar \FF_q$-rational points of $\Per_N(V,f)$. Let $Q
\in \left(\Per_N(V,f)/H\right)(\FF_q)$, and let $P \in \Per_N(V,f)(\bar \FF_q)$
be a representative of the corresponding $H$-orbit. Set $i_H(Q) = \pi_H(P)$. The
map $i_H$ is well-defined because any other representative $P'$ for the
$H$-orbit of $P$ has the same image under $\pi_H$. Since the $H$-orbit of $P$ is
$\Gal(\bar \FF_q / \FF_q)$-invariant, we see that $\pi_H(P) \in
(V/H)(\FF_q)$. Moreover, since $P$ is periodic of period dividing $N$, so is
$\pi_H(P)$.

\begin{lemma}
  \label{lem:bijection}
  $i_H$ is a bijection.
\end{lemma}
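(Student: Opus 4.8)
The plan is to treat injectivity and surjectivity of $i_H$ separately. Throughout I will use the standard description of the geometric points of a quotient: for any finite-type $\FF_q$-scheme $Y$ with an $H$-action admitting a quotient, one has $(Y/H)(\bar \FF_q) = Y(\bar \FF_q)/H$ with $\pi_H$ inducing the orbit map, while $(Y/H)(\FF_q)$ consists of the $\Gal(\bar \FF_q/\FF_q)$-stable $H$-orbits in $Y(\bar \FF_q)$. I will apply this to $Y = V$ and to $Y = \Per_N(V,f)$; the latter is permissible because $\Per_N(V,f)$ is $H$-invariant (it is even $G$-invariant, by the Proposition above), so that $\Per_N(V,f) \hookrightarrow V$ is an $H$-equivariant closed immersion and two $\bar \FF_q$-points of $\Per_N(V,f)$ lie in a common $H$-orbit exactly when their images in $V$ do.

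Injectivity is then immediate: if $Q, Q' \in (\Per_N(V,f)/H)(\FF_q)$ satisfy $i_H(Q) = i_H(Q')$, choose representatives $P, P' \in \Per_N(V,f)(\bar \FF_q)$ of the corresponding orbits; the equality $\pi_H(P) = \pi_H(P')$ puts $P$ and $P'$ in a common $H$-orbit of $V(\bar \FF_q)$, hence of $\Per_N(V,f)(\bar \FF_q)$, so $Q = Q'$.

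The real work is surjectivity. Given $\bar P \in \Per_N(V/H, f_H)(\FF_q)$ of exact period $d$ under $f_H$, I would first observe that $d \mid M$ by Lemma~\ref{lem:period}, since $\bar P \in (V/H)(\FF_q)$. Next I would study the fiber $S := \pi_H^{-1}(\bar P) \subseteq V(\bar \FF_q)$: it is a single $H$-orbit, and it is $\Gal(\bar \FF_q/\FF_q)$-stable because $\bar P$ is $\FF_q$-rational. From $\pi_H \circ f^d = f_H^d \circ \pi_H$ and $f_H^d(\bar P) = \bar P$ it follows that $f^d$ maps $S$ into $S$; since every element of $H$ commutes with $f$ (as $H \subgrp G \subgrp \Aut_f(\FF_q)$), the image $f^d(S)$ is $H$-stable, hence equals $S$ by transitivity. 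Thus $\phi := f^d|_S$ is a permutation of the finite set $S$ commuting with the transitive $H$-action. Any such permutation that has a fixed point must be the identity, so $\langle \phi \rangle$ acts freely on $S$ and the order of $\phi$ divides $|S|$, which divides $|H|$, which divides $|G|$. Therefore $\phi^{|G|} = \id$, and since $N/d = |G| \cdot (M/d)$ is a multiple of $|G|$, we get $f^N|_S = \phi^{N/d} = \id$; that is, $S \subseteq \Per_N(V,f)(\bar \FF_q)$. As $S$ is a $\Gal(\bar \FF_q/\FF_q)$-stable $H$-orbit of $\bar \FF_q$-points of $\Per_N(V,f)$, it determines a point $Q \in (\Per_N(V,f)/H)(\FF_q)$, and $i_H(Q) = \pi_H(P) = \bar P$ for any $P \in S$.

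I expect the one genuinely delicate point to be the assertion that a point in the fiber over $\bar P$ can be chosen to be periodic of period dividing $N$ for $f$, rather than merely preperiodic: after $d$ steps the $f$-orbit of such a point returns only to an $H$-translate of itself, and the cost of this ambiguity is measured by the order of the permutation $\phi$ of the fiber, which is at most $|G|$. This is exactly what the factor $|G|$ in the definition $N = |G| \cdot M$ is designed to absorb, and it is the only place that factor is used.
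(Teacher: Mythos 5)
Your proof is correct and follows essentially the same strategy as the paper's: injectivity is the same one-line argument, and surjectivity proceeds by lifting a periodic point of $V/H$, invoking Lemma~\ref{lem:period} to bound its period by $M$, and using the factor $|G|$ in $N$ to show the lift is genuinely $f$-periodic. The only difference is cosmetic: where you argue that the return map $f^d$ restricted to the fiber is a permutation commuting with the transitive $H$-action, hence semiregular of order dividing $|H|$, the paper instead picks a single $h \in H$ with $f^M(P) = h(P)$ and iterates $\mathrm{ord}(h)$ times --- two packagings of the same mechanism.
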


\begin{proof}
First, injectivity. If $Q,Q'$ are elements of
$\left(\Per_N(V,f)/H\right)(\FF_q)$ such that $i_H(Q) = i_H(Q')$, then
the corresponding lifts $P,P' \in \Per_N(V,f)(\bar \FF_q)$ lie in the
same $H$-orbit. So there is $h \in H$ such that $P' = h(P)$. But this
means $Q = Q'$.

Next, surjectivity. Take $R \in \Per_N(V/H,f_H)(\FF_q)$. Lift it to $P
\in V(\bar \FF_q)$ such that $\pi_H(P) = R$. Now $R$ is $f_H$-periodic
and $\FF_q$-rational, so its period divides $M$ by
Lemma~\ref{lem:period}. It follows that there is $h \in H$ such that $f^M(P) =
h(P)$. If we let $d$ be the order of $h$, then the fact that $h$ commutes with
$f$ shows that
\[
f^{Md}(P) = \underbrace{f^M \circ \cdots \circ f^M}_{d \text{ times}}(P)
= \underbrace{h \circ \cdots \circ h}_{d \text{ times}}(P) = P.
\]
Hence, $P$ is $f$-periodic of period dividing $N = M \cdot |G|$, and $P \in
\Per_N(V,f)(\bar \FF_q)$. The $H$-orbit of $P$ is Galois invariant (as it's a
lift of an $\FF_q$-point under $\pi_H$), so its image in $\Per_N(V,f)/H$ is an
$\FF_q$-rational point $Q$. By construction $i_H(Q) = R$, as desired.
\end{proof}

\begin{remark}
Only the surjectivity argument required our careful choice of $N$.
\end{remark}

\begin{proposition}
  \label{Prop:equalities}
  For any $H \subgrp G$,
  \[
  \left|\left(\Per_N(V,f)/H\right)(\FF_q)\right|
  = \left|\Per_N(V/H,f_H)(\FF_q)\right|
  = \left|\Per(V/H,f_H)(\FF_q)\right|.
  \]
\end{proposition}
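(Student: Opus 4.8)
The plan is to obtain the first equality directly from Lemma~\ref{lem:bijection} and the second by identifying the two relevant sets of $\FF_q$-points, using Lemma~\ref{lem:period} to control periods. No new geometric input should be needed beyond what has already been set up.

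For the first equality, Lemma~\ref{lem:bijection} asserts that the map
\[
i_H \colon \left(\Per_N(V,f)/H\right)(\FF_q) \to \Per_N(V/H,f_H)(\FF_q)
\]
is a bijection, and since both sets are finite this immediately gives
\[
\left|\left(\Per_N(V,f)/H\right)(\FF_q)\right| = \left|\Per_N(V/H,f_H)(\FF_q)\right|.
\]

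For the second equality, I would show that $\Per_N(V/H,f_H)(\FF_q)$ and $\Per(V/H,f_H)(\FF_q)$ coincide as subsets of $(V/H)(\FF_q)$. By the universal property of the cartesian square defining $\Per_N(V/H,f_H)$, an $\FF_q$-point of $V/H$ lies in $\Per_N(V/H,f_H)(\FF_q)$ precisely when it is fixed by $f_H^N$; in particular such a point is periodic, so $\Per_N(V/H,f_H)(\FF_q) \subseteq \Per(V/H,f_H)(\FF_q)$. Conversely, if $P \in \Per(V/H,f_H)(\FF_q)$, then $P$ is an $\FF_q$-rational periodic point of $f_H$, so Lemma~\ref{lem:period} shows its period divides $M$, hence divides $N = |G|\cdot M$; therefore $f_H^N(P) = P$ and $P \in \Per_N(V/H,f_H)(\FF_q)$. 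This yields set equality, and hence equality of cardinalities, completing the chain.

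I do not anticipate a genuine obstacle. The only point requiring a moment of care is matching the scheme-theoretic definition of $\Per_N(V/H,f_H)$ (as a fiber product) with the set-theoretic description of its $\FF_q$-points as the fixed locus of $f_H^N$; this is a direct consequence of the universal property of the cartesian square. Everything else is bookkeeping with divisibility of periods, already packaged into Lemma~\ref{lem:period}.
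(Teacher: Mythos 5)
Your proposal is correct and follows the paper's proof exactly: the first equality is immediate from Lemma~\ref{lem:bijection}, and the second comes from the obvious inclusion $\Per_N(V/H,f_H)(\FF_q) \subseteq \Per(V/H,f_H)(\FF_q)$ together with Lemma~\ref{lem:period} showing every $\FF_q$-rational periodic point has period dividing $M$, hence $N$. The extra remark about matching the fiber-product definition with the fixed locus of $f_H^N$ is a fine (and harmless) elaboration the paper leaves implicit.
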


\begin{proof}
  The first equality is immediate from Lemma~\ref{lem:bijection}. 
  For the second, it's clear that we have an inclusion
  \[
  \Per_N(V/H,f_H)(\FF_q) \subseteq \Per(V/H,f_H)(\FF_q).
  \]
  If $P \in (V/H)(\FF_q)$ is periodic for $f_H$, then its period
  divides $M$ by Lemma~\ref{lem:period}. So its period divides $N = M
  \cdot |G|$. Hence $P \in \Per_N(V/H,f_H)(\FF_q)$, and the reverse
  inclusion holds.
\end{proof}

\begin{proof}[Proof of Theorem~B]
  Since $\Per_N(V,f)$ is defined over $\FF_q$ and $G$-invariant, we
  can apply Theorem~A to obtain
  \[
  \sum_{H \subgrp G} n_H \left|\left(\Per_N(V,f)/H\right)(\FF_q)\right| = 0.
  \]
  Now apply the preceding proposition.
\end{proof}


\section{Zeta functions for periodic points and quotients}

For a scheme $X/\FF_q$ of finite type, write
\[
\zeta(X,s) = \exp\left(\sum_{n \geq 1} |X(\FF_{q^n})| \frac{q^{-ns}}{n}\right)
\]
for the usual zeta function of $X$. 

\begin{theoremC}[\cite{kani_rosen_idempotents_jnt}]
  \label{Thm:KR_zeta}
    Let $X$ be a scheme of finite type over $\FF_q$, and let $G
    \subgrp \Aut_X(\FF_q)$ be a finite group of automorphisms. If
    $\sum_{H \subgrp G} n_H \epsilon_H \sim 0$, then
    \[
    \prod_{H \subgrp G} \zeta(X/H,s)^{n_H} = 1.
    \]
\end{theoremC}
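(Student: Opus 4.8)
The plan is to upgrade the pointwise identity of Theorem A to the level of zeta functions by exploiting the fact that the hypothesis $\sum_{H \subgrp G} n_H \epsilon_H \sim 0$ is insensitive to the base field. Recall that $\zeta(X/H,s)$ is built entirely from the sequence of integers $|(X/H)(\FF_{q^n})|$ as $n$ ranges over positive integers, via the exponential-generating-series formula. So it suffices to prove that for \emph{every} $n \geq 1$,
\[
\sum_{H \subgrp G} n_H \left|(X/H)(\FF_{q^n})\right| = 0;
\]
taking the $\exp$ of $\sum_n (\text{that quantity}) q^{-ns}/n = 0$ and using that $\zeta(X/H,s)^{n_H}$ multiply to give $\exp$ of the sum (all these series converge formally, so the manipulation is legitimate) yields $\prod_{H} \zeta(X/H,s)^{n_H} = 1$.

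The first key step is therefore a base-change observation: fix $n \geq 1$ and set $X' = X \times_{\FF_q} \FF_{q^n}$. Each automorphism $g \in G$, being defined over $\FF_q$, base-changes to an $\FF_{q^n}$-automorphism of $X'$, giving an injection $G \hookrightarrow \Aut_{X'}(\FF_{q^n})$; call the image $G'$, a finite group isomorphic to $G$ as an abstract group. The second key step is to check that quotients commute with this flat (even finite étale) base change: $X'/H' \cong (X/H) \times_{\FF_q} \FF_{q^n}$ for each $H \subgrp G$, so that $|(X'/H')(\FF_{q^n})| = |(X/H)(\FF_{q^n})|$. Since the group-ring condition $\sum_{H} n_H \epsilon_{H'} \sim 0$ in $\QQ[G']$ is literally the same statement as $\sum_H n_H \epsilon_H \sim 0$ in $\QQ[G]$ under the isomorphism $G \cong G'$ (rational characters correspond), Theorem A applied to $X'/\FF_{q^n}$ with the group $G'$ gives exactly $\sum_{H \subgrp G} n_H |(X/H)(\FF_{q^n})| = 0$, as wanted.

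The main obstacle—and it is a minor one—is making sure that forming the quotient by a finite group commutes with base change; this is where one has to be slightly careful, since quotients of schemes by finite groups are a bit delicate in general. For $X$ quasi-projective this is standard (the quotient is constructed affine-locally from rings of invariants, and taking invariants by a finite group commutes with flat base change), and in fact Kani–Rosen's setup already presumes the quotients $X/H$ exist as schemes, so one may simply invoke the same hypotheses under which Theorem A is stated. Alternatively, one can sidestep the issue entirely: rather than base-changing, observe that $|(X/H)(\FF_{q^n})|$ equals the number of $\Gal(\bar\FF_q/\FF_{q^n})$-fixed points of $(X/H)(\bar\FF_q)$, and run the Kani–Rosen character argument directly with $\FF_{q^n}$-points in place of $\FF_q$-points; since that argument is purely formal in the character theory of $G$ and uses nothing about the finite field beyond its being finite, it applies verbatim. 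Either way, once the family of identities $\sum_H n_H |(X/H)(\FF_{q^n})| = 0$ is in hand for all $n$, the passage to zeta functions is a one-line formal computation.
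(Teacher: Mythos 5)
Your proposal is correct and matches the route the paper itself indicates for this direction: apply Theorem~A over $\FF_{q^n}$ for each $n \geq 1$ to get $\sum_{H} n_H \left|(X/H)(\FF_{q^n})\right| = 0$, then compare coefficients in the logarithm of the zeta functions. Your added care about quotients commuting with the flat base change $\FF_q \to \FF_{q^n}$ (or, alternatively, rerunning the character argument over $\FF_{q^n}$ directly) is exactly the point the paper leaves implicit, and both of your ways of handling it are sound.
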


Switching to a dynamical setting, define the arithmetic zeta function of the pair $(V,f)$ to be
\[
\zeta_f(V,s) = \exp\left(\sum_{n \geq 1} |\Per(V,f)(\FF_{q^n})| \frac{q^{-ns}}{n}\right).
\]
(This is not the same as
the Artin/Mazur zeta function described in \cite{Artin_Mazur}). By the Weil bounds,
the series is convergent for $\Re(s) > \dim(V)$.

\begin{theoremD}[\cite{Walton_idempotent}]
  \label{Thm:Walton_zeta}
    Let $V$ be a scheme of finite type over $\FF_q$, let $f \colon V
    \to V$ be an endomorphism of $V$, and let $G \subgrp
    \Aut_f(\FF_q)$ be a finite group of automorphisms of $V$ that
    commute with $f$. If $\sum_{H \subgrp G} n_H \epsilon_H \sim 0$, then
  \[
  \prod_{H \subgrp G} \zeta_{f_H}(V/H,s)^{n_H} = 1.
  \]
\end{theoremD}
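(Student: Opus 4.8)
The plan is to deduce Theorem~D from Theorem~B, in exactly the way that Theorem~C packages Theorem~A: a product $\prod_{H\subgrp G}\zeta_{f_H}(V/H,s)^{n_H}$ equals $1$ if and only if the alternating sum of $\FF_{q^n}$-point counts $\sum_{H\subgrp G} n_H\,|\Per(V/H,f_H)(\FF_{q^n})|$ vanishes for \emph{every} $n\ge 1$, and each of those vanishings is an instance of Theorem~B with the base field enlarged from $\FF_q$ to $\FF_{q^n}$.

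First I would carry out this reduction. Each factor $\zeta_{f_H}(V/H,s)=\exp\bigl(\sum_{n\ge1}|\Per(V/H,f_H)(\FF_{q^n})|\,q^{-ns}/n\bigr)$ lies in $1+q^{-s}\QQ[[q^{-s}]]$ and is therefore an invertible formal power series in $q^{-s}$, so the product makes sense even when some $n_H<0$, and (by the Weil bounds the series converge for $\Re s>\dim V$, so the manipulation is also legitimate for holomorphic functions on a right half-plane) the asserted identity is equivalent to $\sum_{H\subgrp G}n_H\log\zeta_{f_H}(V/H,s)=0$. Expanding the logarithms and comparing coefficients of $q^{-ns}$, this is in turn equivalent to
\[
\sum_{H\subgrp G}n_H\,\bigl|\Per(V/H,f_H)(\FF_{q^n})\bigr|=0\qquad\text{for all }n\ge1,
\]
which is what I would prove.

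Fix $n$ and base change to $\FF_{q^n}$: the scheme $V_{(n)}:=V\times_{\FF_q}\FF_{q^n}$ is of finite type over $\FF_{q^n}$, the endomorphism $f$ and each element of $G$ are a fortiori defined over $\FF_{q^n}$, so $G$ is a finite subgroup of $\Aut_{f_{(n)}}(\FF_{q^n})$, and the relation $\sum n_H\epsilon_H\sim0$ concerns only $G$ and so is untouched. Theorem~B, applied with $\FF_{q^n}$ in the role of $\FF_q$, then gives $\sum_{H\subgrp G}n_H\,|\Per(V_{(n)}/H,(f_{(n)})_H)(\FF_{q^n})|=0$, and to finish it is enough to identify $|\Per(V_{(n)}/H,(f_{(n)})_H)(\FF_{q^n})|$ with $|\Per(V/H,f_H)(\FF_{q^n})|$. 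I expect this identification to be the only delicate point: one needs that forming the $H$-quotient commutes with the extension $\FF_q\hookrightarrow\FF_{q^n}$ and carries $f_H$ to $(f_{(n)})_H$. Since $\pi_H$ is finite and finite quotients commute with flat base change — and we are already in a setting (e.g.\ $V$ quasi-projective, as in \cite{MumfordAV}) where these quotients exist — this is routine; concretely, both counts equal the number of $\Gal(\bar\FF_q/\FF_{q^n})$-stable $H$-orbits of $f$-periodic $\bar\FF_q$-points of $V$, which is precisely the bookkeeping already carried out in the proof of Lemma~\ref{lem:bijection}.

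Alternatively, one can avoid citing Theorem~B over $\FF_{q^n}$ and simply re-run Section~2 with $\FF_{q^n}$ in place of $\FF_q$: the only modification is to replace $N$ by $N_n=|G|\cdot\max\bigl(|(V/H)(\FF_{q^n})|!:H\subgrp G\bigr)$ — note there is no single $N$ serving all $n$, since $|(V/H)(\FF_{q^n})|$ is unbounded in $n$ — after which Lemmas~\ref{lem:period} and~\ref{lem:bijection} and Proposition~\ref{Prop:equalities} go through unchanged (their proofs use only finiteness of the ground field and elementary facts about orbits), giving $|(\Per_{N_n}(V,f)/H)(\FF_{q^n})|=|\Per(V/H,f_H)(\FF_{q^n})|$, and Theorem~A over $\FF_{q^n}$ applied to the $G$-invariant scheme $\Per_{N_n}(V,f)$ then yields the desired vanishing for that $n$. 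Either way, assembling the resulting power-series identities over all $n$ recovers Theorem~D.
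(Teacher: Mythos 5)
Your proposal is correct and is exactly the route the paper itself indicates for Theorem~D: the paper's final section observes that Theorems~C and~D follow from Theorems~A and~B by applying the latter over $\FF_{q^n}$ for varying $n$ and comparing coefficients of the logarithms of the zeta functions. You have simply written out the details the paper leaves implicit (the invertibility of the formal power series, the compatibility of the $H$-quotient with the base change $\FF_q\hookrightarrow\FF_{q^n}$, and the observation that the integer $N$ of Section~2 must be allowed to depend on $n$), all of which check out.
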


Taking the log of both sides and looking at the coefficient on
$q^{-s}$ allows one to deduce Theorems~A and~B from Theorems~C
and~D. In fact, the latter two theorems are equivalent to the former
two: apply Theorems~A and~B over the field $\FF_{q^n}$ for varying $n$
and then use that to compare coefficients in Theorems~C and~D.

\subsection*{Acknowledgments}
This material is based upon work supported by and while the second author served at the National Science Foundation. Any
opinion, findings, and conclusions or recommendations expressed in this material are those of the authors
and do not necessarily reflect the views of the National Science Foundation.

The authors thank the anonymous reviewer for helpful comments.

\bibliographystyle{plain}

\begin{thebibliography}{1}

\bibitem{Artin_Mazur}
Michael Artin and Barry Mazur.
\newblock On periodic points.
\newblock {\em Ann. of Math. (2)}, 81(1):82--99, 1965.

\bibitem{kani_rosen_idempotents_jnt}
Ernst Kani and Michael Rosen.
\newblock Idempotent relations among arithmetic invariants attached to number
  fields and algebraic varieties.
\newblock {\em J. Number Theory}, 46:230--254, 1994.

\bibitem{MumfordAV}
David Mumford.
\newblock {\em Abelian varieties}.
\newblock Tata Institute of Fundamental Research Studies in Mathematics, No. 5.
  Published for the Tata Institute of Fundamental Research, Bombay; Oxford
  University Press, London, 1970.

\bibitem{Walton_idempotent}
Laura Walton.
\newblock Counting periodic points over finite fields.
\newblock {\em J. Number Theory}, 192:386--405, 2018.

\end{thebibliography}

\end{document}